\theoremstyle{plain}
\newtheorem{thm}{Theorem}[section]
\newtheorem{theorem}[thm]{Theorem}
\newtheorem{lemma}[thm]{Lemma}
\theoremstyle{definition}
\newtheorem{definition}[thm]{Definition}
\newtheorem{remark}[thm]{Remark}
\newtheorem{example}[thm]{Example}
\newtheorem{thevarthm}[thm]{\varthmname}
\newenvironment{varthm*}[1]{\trivlist\item[]{\bf #1.}\it}{\endtrivlist}
\renewcommand\geq{\geqslant}
\renewcommand\leq{\leqslant}
\newcommand\be{\begin{eqnarray*}}
\newcommand\ee{\end{eqnarray*}}
\newcommand\newop[2]{\def#1{\mathop{\rm #2}\nolimits}}
\newop\log{log}
\newop\ord{ord}
\newop\Gal{Gal}
\newop\SL{SL}
\newop\Bl{Bl}
\newop\mult{mult}
\newop\mass{mass}
\newop\div{div}
\newop\codim{codim}
\newop\sing{sing}
\newop\vdim{vdim}
\newop\edim{edim}
\newop\Ass{Ass}
\newop\size{size}
\newop\reg{reg}
\newop\satdeg{satdeg}
\newop\supp{supp}
\newop\Neg{Neg}
\newop\Nef{Nef}
\newop\Nefh{Nef_H}
\newop\Eff{Eff}
\newop\Zar{Zar}
\newop\MB{MB}
\newop\MBxC{MB\mathit{(x,C)}}
\newop\NnB{NnB}
\newop\Bigg{Big}
\newop\Effbar{\overline{\Eff}}
\def\keywordname{{\bfseries Keywords}}%
\def\keywords#1{\par\addvspace\medskipamount{\rightskip=0pt plus1cm
\def\and{\ifhmode\unskip\nobreak\fi\ $\cdot$
}\noindent\keywordname\enspace\ignorespaces#1\par}}
\def\subclassname{{\bfseries Mathematics Subject Classification
(2000)}\enspace}
\def\subclass#1{\par\addvspace\medskipamount{\rightskip=0pt plus1cm
\def\and{\ifhmode\unskip\nobreak\fi\ $\cdot$
}\noindent\subclassname\ignorespaces#1\par}}
\begin{document}
\title{Hirzebruch-Kummer covers of algebraic surfaces}
\author{Piotr Pokora}
\date{\today}
\maketitle
\thispagestyle{empty}
\begin{abstract}
The aim of this paper is to show that using some natural curve arrangements in algebraic surfaces and Hirzebruch-Kummer covers one cannot construct new examples of ball-quotients, i.e., minimal smooth complex projective surfaces of general type satisfying equality in the Bogomolov-Miyaoka-Yau inequality.
\keywords{ball-quotients, Hirzebruch surfaces, surfaces of general type, Hirzebruch-Kummer covers, curve configurations}
\subclass{14C20, 14J29, 14N20}
\end{abstract}
\section{Introduction}
In his pioneering papers \cite{Hirzebruch,Hirzebruch1}, Hirzebruch constructed some new examples of algebraic surfaces which are ball-quotients, i.e., algebraic surfaces for which the universal cover is the $2$-dimensional unit ball. Alternatively, these are minimal smooth complex projective surfaces of general type satisfying equality in the Bogomolov-Miyaoka-Yau inequality~\cite{M84}:

$$K_{X}^{2} \leq 3e(X),$$
where $K_{X}$ denotes the canonical divisor and $e(X)$ is the topological Euler characteristic. The key idea of Hirzebruch, which enabled constructing new examples of ball-quotients, is that one can consider abelian covers of the complex projective plane branched along line arrangements \cite{BHH87}. The same idea was used by the author in \cite{Pokora}, where the main result tells us that for $d$-configurations of curves (configurations of smooth irreducible curves in the complex projective plane, each irreducible component has the same degree $d \geq 2$, and all intersection points are transversal) the associated Hirzebruch-Kummer construction in most cases does not provide new examples of ball-quotients. On the other side, it is worth pointing out that  Hirzebruch's idea can be also investigated from a different point of view focusing on $H$-indices, and we refer to \cite[Section 4]{Roulleau} for details. Looking at Hirzebruch's construction, one might hope that suitably adapted it can yield new examples of ball-quotients coming from certain curve arrangements in algebraic surfaces. That was a hope of F. Hirzebruch and his research group in the 1980s when several PhD theses were written. For instance, a nice project was conducted by Bruce Hunt \cite{Hunt}, where he considered a 3-dimensional analog of Hirzebruch's construction. However, it turned out that using hyperplane arrangements in the three dimensional complex projective space one cannot construct new examples of ball-quotients using the so-called \emph{Fermat covers}. It is worth pointing out that in \cite[Kapitel 5]{BHH87} the authors provided a list of the so-called weighted line arrangements leading to ball-quotients via \emph{good abelian covers}, for an outline in English we refer to \cite{Hirzebruch85}. Our idea is to extend investigations of Hirzebruch's school to a larger class of surfaces and curve arrangements hoping that this would lead to new examples of ball-quotients. In the first part of the paper, we apply Hirzebruch's construction to rational section arrangements in Hirzebruch surfaces and we show that it is not possible to construct new examples of ball-quotients. Let us point out here that by Hirzebruch surfaces we mean classical ones and these surfaces have nothing to do with the resulting surfaces in Hirzebruch's construction. As we will be able to observe, the reason behind the claim is purely combinatorial, so our problem does not touch the question whether a certain section arrangement can be geometrically realized. In the second part, we present a general result which tells us that smooth algebraic surfaces $W$ with $K_{W}$ nef and effective and certain curve arrangements coming from ample and effective linear systems do not provide new examples of ball-quotients. At last, we provide an improved version of \cite[Theorem 4.2]{Pokora} which shows that there is in fact \emph{no} $d$-configuration such that the associated Hirzebruch-Kummer cover leads to a new example of ball-quotients.

We briefly present the main construction due to Hirzebruch focusing on the case of rational curves in Hirzebruch surfaces -- this was done in detail, for instance, in the master thesis by S. Eterovi\'{c} \cite{Eterovic}. However, since Hirzebruch's construction is the Swiss Army Knife in our toolbox, we decided to present a concise introduction to this topic. The whole theory which stands behind this paper can be found in the classical textbook \cite{BHH87}, a general construction of abelian covers for algebraic varieties can be found in \cite{Pardini}. It is worth mentioning that a similar construction to those discovered by Hirzebruch was also performed in \cite{Roulleau1}. Finally, we would like warmly encourage the reader to look at fantastic lecture notes by F. Catanese \cite{Catanese}, where the author in Section 4 is presenting a topological side of Hirzebruch-Kummer covers in the case of line arrangements (and more).

We work exclusively over the complex numbers.
\section{Hirzebruch-Kummer covers and Hirzebruch surfaces}
Let us denote by $\mathbb{F}_{e}$ the $e$-th Hirzebruch surface. The Picard group of $\mathbb{F}_{e}$ is generated by $\Gamma$ and $F$ with $\Gamma^{2} = -e$, $\Gamma.F = 1$, and $F^{2}=0$. In this and the next section, we will consider only rational section arrangements that we are going to define right now.
\begin{definition}
Let $\mathcal{C} = \{C_{1}, ..., C_{k}\}$ with $C_{i} \in |(e+1)F+\Gamma|$ and $k \geq 5$. We say that $\mathcal{C}$ is a rational section arrangement if:
\begin{itemize}
\item all curves $C_{i}$ are irreducible and smooth;
\item all intersection points are transversal;
\item all singular points of $\mathcal{C}$ have multiplicities strictly less than $k-2$. 
\end{itemize}
\end{definition}
Let us emphasize that the name of these arrangements comes from the fact that all irreducible components are smooth rational curves.
Now we introduce the following combinatorial notion. We denote by $t_{r} = t_{r}(\mathcal{C})$ the number of $r$-fold points, i.e., points where exactly $r$ curves from $\mathcal{C}$ intersect, and additionally for $i \in \{0,1,2\}$ we define
$$f_{i} = f_{i}(\mathcal{C}) = \sum_{r\geq 2}r^{i}t_{r}.$$
We start with the following combinatorial result in the spirit of \cite[Lemma 13.1]{Eterovic}.
\begin{lemma}
For a rational section arrangement $\mathcal{C}$ one has $f_{0} \geq e + 6$.
\end{lemma}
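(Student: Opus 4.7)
My plan is to combine a double-counting identity with a Fisher-type rank inequality and then a short case analysis. First, I compute $C_i\cdot C_j = ((e+1)F+\Gamma)^2 = 2(e+1) - e = e+2$, so by transversality every pair of curves in $\mathcal{C}$ meets in exactly $e+2$ distinct singular points of the arrangement. Double-counting the pairs of curves passing through each singular point gives $\sum_{r\geq 2}\binom{r}{2}t_r = \binom{k}{2}(e+2)$, and using the multiplicity cap $r\leq k-3$ yields the first bound
\[
f_0 \;\geq\; \frac{k(k-1)(e+2)}{(k-3)(k-4)}. \qquad (\ast)
\]

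The bound $(\ast)$ degrades to $\sim e+2$ as $k\to\infty$, so it cannot handle large $k$ on its own. To close this gap I will use a Fisher-type inequality via the $f_0\times k$ incidence matrix $N$ of singular points versus curves. A direct computation shows $N^{T}N = \operatorname{diag}(s_i) + (e+2)(J-I)$, where $s_i$ is the number of singular points on $C_i$. The per-curve identity $\sum_{p\in C_i}(r_p-1) = (k-1)(e+2)$ combined with $r_p-1\leq k-4$ forces $s_i \geq (k-1)(e+2)/(k-4) > e+2$, so $\operatorname{diag}(s_i)-(e+2)I$ is positive definite; adding the positive semidefinite $(e+2)J$ preserves positive definiteness, $N^{T}N$ has full rank $k$, and hence $f_0\geq k$.

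To conclude, I split into cases. If $k\geq e+6$, the rank bound already gives $f_0\geq k\geq e+6$. Otherwise $5\leq k\leq e+5$, and since $k\mapsto k(k-1)/((k-3)(k-4))$ is decreasing for $k\geq 5$ (a short calculus check), the bound $(\ast)$ is weakest at $k=e+5$, where it reads $f_0\geq (e+5)(e+4)/(e+1)$. The elementary inequality $(e+5)(e+4)-(e+6)(e+1) = 2e+14>0$ then yields $f_0\geq e+6$.

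The main subtlety is that neither the counting bound nor the rank bound alone reaches $e+6$: $(\ast)$ is strong for small $k$ but weak for large $k$, while the Fisher bound behaves in the opposite way, so the two estimates have to be bridged. A minor technical step is verifying the strict positive definiteness of $\operatorname{diag}(s_i)-(e+2)I$, which is exactly where the hypotheses $k\geq 5$ and $r\leq k-3$ enter in an essential way.
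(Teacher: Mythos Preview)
Your proof is correct. The overall strategy coincides with the paper's: both arguments rest on the same two ingredients, namely the double-counting bound $(\ast)$ obtained from $\sum_r\binom{r}{2}t_r=\binom{k}{2}(e+2)$ together with the cap $r\le k-3$, and the inequality $f_0\ge k$. The paper packages these via a short contradiction: assuming $f_0<e+6$, it rewrites $(\ast)$ as $(e+2)(3k-6)<2(k-3)(k-4)$, then invokes $k\le f_0$ (quoted from \cite[Remark~7.4]{Gian}) to get $k-4\le e+2$ and reach an absurdity. Your version differs in two respects. First, you argue directly by a case split with the monotonicity of $k\mapsto k(k-1)/((k-3)(k-4))$ rather than by contradiction. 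Second, and more substantively, you replace the citation for $f_0\ge k$ with a self-contained Fisher-type argument on the incidence matrix, using the per-curve count $\sum_{p\in C_j}(r_p-1)=(k-1)(e+2)$ and $r_p\le k-3$ to force $s_i>e+2$ and hence positive definiteness of $N^TN$. What your approach buys is independence from the external reference; what the paper's buys is brevity once that reference is granted.
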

\begin{proof}
Suppose that $f_{0} < e + 6$. Then by the combinatorial equality
$$(e+2)(k^2 - k) = \sum_{r \geq 2}r(r-1)t_{r} \leq (k-3)(k-4)f_{0} < (k-3)(k-4)(e+6).$$
This leads to 
$$(e+2)(3k-6) < 2(k-3)(k-4).$$
Since $k \leq f_{0} < e+6$, where the first inequality is a consequence of \cite[Remark~7.4]{Gian}, we obtain
$$k-4 \leq e + 2 < \frac{2(k-3)(k-4)}{3k-6}.$$
Thus
$$ 0 < \frac{-k(k-4)}{3k-6}$$
and since $k\geq 5$ we arrive at a contradiction.
\end{proof}
Now we are going to provide a short outline of Hirzebruch's construction. This can be done, of course, in a general setting, but we believe that it would not lead to confusion once we explain this construction using a particular example. Given a rational section arrangement $\mathcal{C}$, denote by $s_{i}$ the section defining $C_{i}\in \mathcal{C}$, and consider the following morphisms:
$$f : \mathbb{F}_{e} \ni x \rightarrow (s_{0}(x) : ... : s_{k-1}(x)) \in \mathbb{P}^{k-1};$$
$$p: \mathbb{P}^{k-1} \ni (x_{0}: ... : x_{k-1}) \rightarrow (x_{0}^{n} : ... : x_{k-1}^{n}) \in \mathbb{P}^{k-1} \text{ with } n \geq 2.$$
Now we define the fiber product $X :=\{(x,y) \in \mathbb{F}_{e} \times \mathbb{P}^{k-1}: f(x) = p(y) \}$ which is a normal surface, and we obtain a morphism $\pi: X \rightarrow \mathbb{F}_{e}$ whose branch locus is $\mathcal{C}$. After the resolution of singularities $\rho: Y \rightarrow X$ we obtain a smooth projective surface which is called the Hirzebruch-Kummer cover of order $n^{k-1}$ and exponent $n\geq 2$. Notice that $q \in X$ is a singular point iff the multiplicity of $\pi(q)$ is greater equal to $3$. Let us denote by $\tau: Z \rightarrow \mathbb{F}_{e}$ the blowing-up along all singular points of $\mathcal{C}$ such that their multiplicities are greater equal to $3$, then there exists a morphism $\sigma: Y \rightarrow Z$ such that $\pi\rho = \tau \sigma$. Let $p = \pi(q)$ be a singular point of $\mathcal{C}$ having multiplicity $\geq 3$, and let us denote by $E_{p}$ the exceptional curve in $Z$ over $p$. Now we are ready to compute the Chern numbers. Since this procedure is well-studied (for instance \cite{BHH87,Pokora}), we present this part in a rather sketchy way. It is easy to compute the second Chern number of $Y$ (please notice that $e(C_{i}) = 2$):
$$e(Y)/n^{k-3} = n^{2}(4-2k+f_{1}-f_{0})+2n(k-f_{1}+f_{0}) + f_{1}-t_{2}.$$
Now we want to compute the first Chern number of $Y$, which is equal to $c_{1}^{2}(Y) = n^{k-1}D^{2}$
with 
$$D = \tau^{*}\bigg(K_{\mathbb{F}_{e}} + \frac{n-1}{n}C\bigg) + \sum_{p} \bigg( \frac{2n-1}{n}- \frac{n-1}{n}r_{p}\bigg)E_{p},$$
where $C = C_{1} + ... + C_{k}$, $r_{p}$ denotes the multiplicity of a singular point $p \in {\rm Sing}(\mathcal{C})$, i.e., the number of curves from $\mathcal{C}$ passing through $p$, and the above sum goes through all essential singularities of $\mathcal{C}$, i.e., those singular points which have multiplicity $\geq 3$. Quite tedious computations, with make use of the combinatorial equality $(e+2)(k^{2}-k) = f_{2}-f_{1}$, lead to 
$$c_{1}^{2}(Y)/n^{k-3}= n^{2}(8-(e+6)k+3f_{1}-4f_{0}) + 4n(k-f_{1}+f_{0}) + (e+2)k + f_{1}-f_{0}+t_{2}.$$

Now our aim is to show that for $n \geq2$, $k\geq 5$, and $e\geq 2$, the Kodaira dimension of $Y$ is non-negative, which will allow us to use the Bogomolov-Miyaoka-Yau inequality -- this part is also in the spirit of \cite{Eterovic}. However, we need to assume additionally the following property:
$$(\bullet) \text{ in } \mathcal{C} \text{ one can find exactly four sections intersecting only at double and triple points}.$$
First of all, observe that $D$ can be written as
$$D = (e+2)F + \sum_{p}a_{p}E_{p} + \sum_{j}b_{j}C_{j}^{'},$$
where $C_{j}^{'} = \tau^{*}C_{j} - \sum_{p \in {\rm EssSing}(\mathcal{C})\cap C_{j}}E_{p}$ is the strict transform of $C_{j}$, and the crucial thing is that the coefficients $a_{p}$ and $b_{j}$ are non-negative, namely
$$a_{p} \geq \frac{2n-1}{n} - \frac{3}{2}, \quad b_{j} \geq \frac{n-1}{n}-\frac{1}{2}.$$
Let us emphasize directly that in the case of $a_{p}$'s we rigorously used the fact that we can find four sections in $\mathcal{C}$ not intersecting along {\em quadruple} points. The above considerations allow us to conclude that $D$ can be written as a linear combination of effective divisors with non-negative coefficients. Additionally, it is easy to see that $D.E_{p} \geq 0$, and we need to show that if $n\geq 2$ and $e\geq 2$ one has $D.C_{j}^{'} \geq 0$.
Observe that
$$D.C_{j}^{'} = -e - 4 + \frac{n-1}{n}k(e+2) - \sum_{p \in C_{j}, r_{p}\geq 3} \bigg(\frac{n-1}{n}(r_{p}-1) - 1\bigg).$$
Using the following combinatorial count
$$(e+2)(k-1) = \sum_{p \in {\rm Sing}(\mathcal{C})\cap C_{j}}(r_{p}-1)$$ 
we obtain
$$\sum_{p \in C_{j}, r_{p}\geq 3} \bigg(\frac{n-1}{n}(r_{p}-1) - 1\bigg) = \sum_{p \in C_{j}, r_{p}\geq 3} \bigg(\frac{n-1}{n}(r_{p} - 1) \bigg)  - \#| p \in C_{j} : r_{p} \geq 3|$$
$$ = \frac{n-1}{n}(e+2)(k-1) - \#| p \in C_{j} : r_{p} \geq 3| - \frac{n-1}{n}\#| p \in C_{j} : r_{p} =2|.$$
This leads to
$$D.C_{j}^{'} \geq -2 - \frac{e+2}{n} + \frac{n-1}{n} \#| p \in C_{j} : r_{p} \geq 2|.$$
Since $\#| p \in C_{j} : r_{p} \geq 2| \geq e + 6$ we get $D.C_{j}^{'} \geq 0$. 

Now we are allowed to use the Bogomolov-Miyaoka-Yau inequality. Let us define the following Hirzebruch polynomial:
$$H_{\mathcal{C}}(n) = \frac{3e(Y) - c_{1}^{2}(Y)}{n^{k-3}} = n^{2}(4 + ek+f_{0}) + 2n(k-f_{1}+f_{0}) + 2f_{1}-4t_{2}+f_{0} -(e+2)k,$$
and by the previous considerations, if $\mathcal{C}$ is a rational section arrangement with $k\geq 5$ satisfying additionally $(\bullet)$, $n\geq 2$, and $e\geq 2$, then $H_{\mathcal{C}}(n) \geq 0$. If we can find a section arrangement $\mathcal{C}'$ with $(\bullet)$ such that $H_{\mathcal{C}'}(n_{0}) = 0$ for some $n_{0} \geq 2$, then the associated Hirzebruch-Kummer cover is a ball-quotient.

\section{Rational section arrangements in Hirzebruch surfaces}
In this section, we check whether there exists a rational section arrangement $\mathcal{C}$ such that the associated Hirzebruch-Kummer cover $Y$ is a ball-quotient. It turns out that the answer is negative. In order to observe this phenomenon, we will use the theory of constantly branched covers which was developed in \cite{BHH87}. Let us recall some facts from \cite[Section 1.3]{BHH87}. We know that if $Y$ is a ball-quotient, then all irreducible components of the (reduced) ramification divisor $\sigma^{*}(\bar{\mathcal{C}})$, where $\bar{\mathcal{C}}$ is the total transform of $\mathcal{C}$ in $Z$, must satisfy $\text{prop}(E) = 2E^{2} - e(E) = 0$. In particular, each irreducible component $C$ of $\sigma^{*}E_{p}$ satisfies $C^{2} = -n^{r_{p}-2}$ and
\begin{equation}
\label{firstprop}
\text{prop}(C) = n^{r_{p}-2}((r_{p}-2)(n-1)-4).
\end{equation}
The condition $\text{prop}(C) = 0$ leads to
$$(n-1)(r_{p}-2) = 4,$$ and the following pairs are admissible (we have the following order of listing: $(n,r_{p})$):
$$(5,3), \quad (3,4), \quad (2,6).$$
It means that $Y$ can be a ball-quotient if one of the following conditions is satisfied:
\begin{itemize}
\item $t_{r} = 0$ for $r \neq 2, 3$ and $n = 5$,
\item $t_{r} = 0$ for $r \neq 2, 4$ and $n = 3$,
\item $t_{r} = 0$ for $r \neq 2, 6$ and $n = 2$,
\end{itemize}
The above considerations suffice to conclude that we cannot construct new examples of ball-quotients for $n \in \{3,5\}$. Let us compute $H_{\mathcal{C}}(3)$ and $H_{\mathcal{C}}(5)$. After a moment we see that:
$$H_{\mathcal{C}}(3) = 36 + 8ek + 4k - 4f_{1} + 16f_{0}-4t_{2} \geq 0,$$
$$H_{\mathcal{C}}(5) = 100 +24ek +8k + 36f_{0} - 8f_{1} - 4t_{2} \geq 0.$$
Consider the case $n=3$ and $\mathcal{C}$ having only double and quadruple points. If the associated Hirzebruch-Kummer cover $Y$ is a ball-quotient, then in particular
$$ 0 = H_{\mathcal{C}}(3) = 9 + (2e+1)k + t_{2} > 0,$$
a contradiction.
Similarly, if $n=5$ and $\mathcal{C}$ has only double and triple points, then $Y$ is a ball-quotient if
$$0 = H_{\mathcal{C}}(5) = 25 + (6e+2)k+4t_{2} + 3t_{3} >0,$$
a contradiction.

Now we need to consider the case $n=2$. We have
$$H_{\mathcal{C}}(2) = 16 + 3ek + 2k + 9f_{0}-2f_{1} - 4t_{2} \geq 0.$$
If there exists a rational section arrangement $\mathcal{C}'$ having only double and sixfold points such that the associated Hirzebruch-Kummer cover is a ball-quotient, then
$$16 + (3e+2)k + t_{2} = 3t_{6},$$
We need to find some constraints on the number of double and sixfold points of $\mathcal{C}'$. Here we can use in fact a general statement from Section 4 (positivity of the canonical divisor in this place does not play any role). In order to avoid repetitions, let us briefly conclude that using formulae (\ref{constraints}) with $a = e+2$ and $b=-e-4$ we finally obtain
$$-8 = (e+1)k,$$
a contradiction.
\begin{theorem}
There does not exist any section arrangement $\mathcal{C} \subset \mathbb{F}_{e}$ with $e \geq 2$ such that the associated Hirzebruch-Kummer cover is a ball-quotient.
\end{theorem}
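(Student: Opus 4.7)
The plan is to combine the constantly branched cover criterion with the explicit Hirzebruch polynomials computed above. If $Y$ is a ball-quotient, then $\text{prop}(C) = 0$ for every irreducible component of the reduced ramification divisor; by equation \eqref{firstprop} this forces $(n-1)(r_{p}-2) = 4$ at every essential singular point $p$ of $\mathcal{C}$. The only integral solutions with $n \geq 2$ and $r_p \geq 3$ are the three admissible pairs $(n,r_p) \in \{(5,3), (3,4), (2,6)\}$, so it suffices to rule out each of these three scenarios.

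For $n = 3$ one may assume $t_r = 0$ for $r \notin \{2,4\}$. Substituting $f_0 = t_2 + t_4$ and $f_1 = 2t_2 + 4t_4$ into the formula for $H_{\mathcal{C}}(3)$ displayed above, the terms in $t_4$ cancel and the expression simplifies to (a positive multiple of) $9 + (2e+1)k + t_2 > 0$, which contradicts the BMY equality needed for a ball-quotient. The case $n = 5$ with $t_r = 0$ for $r \notin \{2,3\}$ is handled in the same manner: one gets $H_{\mathcal{C}}(5)$ reducing to (a positive multiple of) $25 + (6e+2)k + 4t_2 + 3t_3 > 0$.

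The main obstacle is the case $n = 2$ with only double and sixfold points, since $H_{\mathcal{C}}(2) = 0$ on its own only yields the relation $16 + (3e+2)k + t_2 = 3t_6$, which does not immediately contradict positivity. To close this gap I would invoke the combinatorial constraints \eqref{constraints} proved in Section~4, specialized to the coefficient data $a = e+2$ and $b = -e-4$ appearing in the divisor $D$ on $\mathbb{F}_e$. These constraints provide a second linear relation among $k$, $t_2$, $t_6$, and $e$ independent of $H_{\mathcal{C}}(2)=0$; eliminating $t_2$ and $t_6$ from the pair of relations collapses the system to $-8 = (e+1)k$, which is absurd for $e \geq 2$ and $k \geq 5$. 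Together with the previous two cases this exhausts the admissible triples and proves that no rational section arrangement in $\mathbb{F}_e$ with $e \geq 2$ yields a ball-quotient via the Hirzebruch--Kummer construction.
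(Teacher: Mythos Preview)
Your proposal is correct and follows essentially the same route as the paper: reduce via the $\text{prop}=0$ condition to the three admissible pairs, dispose of $n\in\{3,5\}$ by direct evaluation of $H_{\mathcal{C}}(n)$, and in the $n=2$ case combine $H_{\mathcal{C}}(2)=0$ with the constraints~\eqref{constraints} (using $a=e+2$, $b=-e-4$) to reach $-8=(e+1)k$. The only point the paper makes explicit that you leave implicit is that the derivation of~\eqref{constraints} in Section~4 does not actually use the nefness/effectiveness of $K_W$, so plugging in the negative value $b=-e-4$ is legitimate.
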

Our choice of curves that we applied in Hirzebruch-Kummer's construction in Section 2 and Section 3 is not accidental. In the next section, we are going to give some evidence that we should not expect to obtain new examples of ball-quotients using curves of positive genera.
\section{Curve arrangements in surfaces with nef and effective canonical divisor}
In this section, we consider only pairs $(W,\mathcal{C})$, where $W$ is an algebraic surface and $\mathcal{C}$ an arrangement of curves, admitting Hirzebruch-Kummer covers. We assume that our surface $W$ is smooth, complex, and projective such that $K_{W}$ is nef and effective. We will keep the same notation as in Section 2.
\begin{definition}
Assume that the linear system $|A|$ is effective for a certain ample line bundle $A$ on $Z$ and denote by $C_{1}, ..., C_{k} \in |A|$ smooth irreducible curves. We say that $\mathcal{C}= \{C_{1}, ...,C_{k}\}$ is a regular arrangement of curves if
\begin{itemize}
\item all intersection points are transversal;
\item there is no point where all curves from $\mathcal{C}$ meet.
\end{itemize}
\end{definition}

Let us recall that for such configurations the following combinatorial identities hold:
\begin{equation}
\label{first}
a(k^{2}-k) = \sum_{r\geq 2}(r^{2}-r)t_{r},
\end{equation}
\begin{equation}
\label{second}
a(k-1) = \sum_{p \in {\rm Sing}(\mathcal{C})\cap C_{j}} (r_{p}-1),
\end{equation}
where $a = A^{2}$ and $C_{j} \in \mathcal{C}$ is fixed. Moreover, we define $b := K_{W}.C_{j}$, and obviously $b \geq 0$.
Now we consider the abelian cover of $W$ branched along a regular arrangement $\mathcal{C}$, which can be viewed, according to the previous sections, as the minimal desingularization $Y$ of the fiber product $W \times_{\mathbb{P}^{k-1}}\mathbb{P}^{k-1}$. We can compute the Chern numbers of $Y$, namely
$$c_{2}(Y)/n^{k-3} = n^{2}(e(W) + (a+b)k+f_{1}-f_{0})+n(-(a+b)-2f_{1}+2f_{0})+f_{1}-t_{2};$$
$$c^{2}_{1}(Y)/n^{k-3} = n^{2}(K_{W}^{2} + (a+2b)k+3f_{1}-4f_{0}) +2n(-(a+b)k-2f_{1}+2f_{0})+ak+f_{1}-f_{0}+t_{2}.$$

Let us observe that $c_{1}^{2}(Y) = n^{k -1}K^{2}$ with
$$K = \tau^{*}(K_{W}) + \sum E_{P} + \frac{n-1}{n}\bigg( \sum E_{P} + \tau^{*}C - \sum r_{P}E_{P}\bigg),$$
where $C = C_{1} + ... + C_{k}$, which shows that $K$ is effective.
It means that we are allowed to use the Bogomolov-Miyaoka-Yau inequality for any $n\geq 2$. Let us define the following Hirzebruch polynomial:
\begin{multline}
H_{\mathcal{C}}(n) = \frac{3c_{2}(Y) - c_{1}^{2}(Y)}{n^{k - 3}} = n^{2}(3e(W)-K^{2}_{W} + (2a+b)k + f_{0}) + n(-(a+b)k -2f_{1} + 2f_{0}) \\-ak +2f_{1}+f_{0}-4t_{2}.
\end{multline}

Now we would like to check whether there exists a regular arrangement $\mathcal{C}$ such that the associated Hirzebruch-Kummer cover provides a new example of ball-quotients. We need to consider cases for $n \in \{2,3,5\}$. We can start with $n=3$. For brevity, let us denote by $\delta(W) : = 3e(W) - K^{2}_{W}$. We see that $H_{\mathcal{C}}(3) = 0$ leads to 
$$\frac{9}{4}\delta(W) + \bigg(\frac{7}{2}a + \frac{3}{2}b\bigg)k + t_{2} + t_{3} = \sum_{r \geq 5}(r-4)t_{r},$$
and if there exists a regular arrangement with double and quadruple points providing a ball-quotient, then
$$0 = \frac{9}{4}\delta(W) + \bigg(\frac{7}{2}a + \frac{3}{2}b\bigg)k  + t_{2}  > 0,$$
a contradiction.

Similarly, if $n=5$, then $H_{\mathcal{C}}(5) = 0$ leads to 
$$\frac{25}{4}\delta(W) + (11a+5b)k+4t_{2} + 3t_{3} + t_{4} = \sum_{r\geq 5}(2r-9)t_{r},$$
and if there exists a regular configuration with only double and triple points providing a ball-quotient, then
$$0 = \frac{25}{4}\delta(W) + (11a+5b)k +4t_{2} + 3t_{3} >0,$$
a contradiction.

Now we need to deal with the case $n=2$. The condition $H_{\mathcal{C}}(2) = 0$ leads to
$$4\delta(W) + (5a+2b)k + t_{2} + 3t_{3} + t_{4} = \sum_{r\geq 5}(2r-9)t_{r},$$
and if there exists a regular configuration $\mathcal{C}$ having double and sixfold points providing a ball-quotient, then
\begin{equation}
\label{secondprop}
4\delta(W) + (5a+2b)k  + t_{2} = 3t_{6}.
\end{equation}
We need to find combinatorial constraints on the number of double and sixfold points for such regular arrangements. This can be done using an extended version of \emph{deviations from proportionality} -- see for instance \cite{Pokora} for details. It means that we need to investigate the condition ${\rm prop}(D_{j}) = 0$ with $D_{j} = \sigma^{*}(C_{j}')$, where $C_{j}'$ is the strict transform of $C_{j} \in \mathcal{C}$ under the blowing-up $\tau$. This gives
$$ 0 = {\rm prop}(D_{j}) = n^{k-3}({\rm prop}(C_{j}) + (n-1)(r_{j}-e(C_{j}))-2\gamma_{j}),$$
where $r_{j}$ denotes the number of singular points in $C_{j}$, $\gamma_{j}$ denotes the number of essential singular points, i.e., those with multiplicities $\geq 3$, and $r_{j,2}$ denotes the number of double points (altogether $r_{j} = \gamma_{j} + r_{j,2}$).

For $n=2$, we have only double and sixfold points, and we need to use the conditions ${\rm prop}(C) = 0$ and ${\rm prop}(D_{j}) = 0$ simultaneously. These two conditions lead us to 
$$r_{j,6} + r_{j,2} = \frac{a(k-1) -8a - 4b}{3}.$$
It is worth pointing out that during computations we have used (\ref{second}). Since
$$\sum_{j} r_{j,r}= rt_{r},$$
we obtain the following formula:
$$2t_{2} + 6t_{6} = f_{1} = k \cdot \frac{a(k-1) -8a - 4b}{3}.$$
Combining this with the combinatorial equality (\ref{first}), we can find constraints on $t_{2}$ and $t_{6}$, namely:
\begin{equation}
\label{constraints}
t_{2} = \frac{ak^{2}-21ak-10bk}{12} \text{ , } t_{6} = \frac{ak^{2}+3ak+2bk}{36}.
\end{equation}
Plugging these values to $(\ref{secondprop})$, we obtain
$$0< (3a+b)k = -4\delta(W) \leq 0$$
a contradiction.

We have shown the following theorem.
\begin{theorem}
Let $W$ be a smooth complex projective surface with $K_{W}$ nef and effective, and $\mathcal{C}$ a regular arrangement of $k\geq 5$ curves. Then the associated Hirzebruch-Kummer cover is never a ball-quotient.
\end{theorem}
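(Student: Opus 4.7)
The plan is to follow the strategy already used for section arrangements in $\mathbb{F}_e$: assume $Y$ is a ball-quotient, invoke the BHH classification of components with vanishing proportionality to pin down the admissible exponents, and then show that the resulting Hirzebruch polynomial $H_{\mathcal{C}}(n)$ cannot vanish for any $n\in\{2,3,5\}$.

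First I would verify that the Bogomolov-Miyaoka-Yau inequality is available for $Y$ for every $n\geq 2$. The decomposition of the canonical divisor $K$ displayed just before the theorem exhibits $K$ as a non-negative linear combination of $\tau^{*}K_{W}$, the exceptional divisors $E_{P}$, and the strict transforms of the $C_{j}$; since $K_{W}$ is effective by hypothesis, $K$ itself is effective, so $Y$ has non-negative Kodaira dimension and $H_{\mathcal{C}}(n)\geq 0$. I also record that applying BMY to $W$ itself gives $\delta(W)=3e(W)-K_{W}^{2}\geq 0$, which will supply the decisive sign input. If $Y$ is a ball-quotient then $H_{\mathcal{C}}(n)=0$, and the theory of constantly branched covers forces every irreducible component $E$ of the reduced ramification divisor to satisfy $\text{prop}(E)=0$. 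Applied to the components over an essential singularity of multiplicity $r_{p}$, exactly as in \eqref{firstprop}, this yields $(n-1)(r_{p}-2)=4$, so only $(n,r_{p})\in\{(5,3),(3,4),(2,6)\}$ can occur.

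For $n\in\{3,5\}$ I would substitute directly into $H_{\mathcal{C}}(n)=0$ and rearrange all non-negative terms to the left, obtaining the two identities already worked out in the text: for $n=3$ with only double and quadruple points, $\tfrac{9}{4}\delta(W)+(\tfrac{7}{2}a+\tfrac{3}{2}b)k+t_{2}$ must vanish; for $n=5$ with only double and triple points, $\tfrac{25}{4}\delta(W)+(11a+5b)k+4t_{2}+3t_{3}$ must vanish. Since $a=A^{2}>0$ (as $A$ is ample), $b=K_{W}.C_{j}\geq 0$, $\delta(W)\geq 0$, and $k\geq 5$, each left-hand side is strictly positive, a contradiction.

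The main obstacle is the case $n=2$, where the constraint $H_{\mathcal{C}}(2)=0$ collapses only to \eqref{secondprop}, namely $4\delta(W)+(5a+2b)k+t_{2}=3t_{6}$, and the sign of the right-hand side cannot be controlled from this identity alone. To close this case I would additionally impose $\text{prop}(D_{j})=0$ for the components $D_{j}=\sigma^{*}(C_{j}')$ on top of $\text{prop}(C)=0$ for the components over the essential singularities. Using the local expression for $\text{prop}(D_{j})$ recorded in the text and summing over $j$, together with the combinatorial identities \eqref{first} and \eqref{second}, produces the closed-form values of $t_{2}$ and $t_{6}$ displayed in \eqref{constraints}. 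Plugging these back into \eqref{secondprop} collapses it to $(3a+b)k=-4\delta(W)$, whose left-hand side is strictly positive while the right-hand side is non-positive, the desired contradiction.
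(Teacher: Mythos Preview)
Your proposal is correct and follows essentially the same route as the paper: reduce to $n\in\{2,3,5\}$ via the proportionality condition on components over essential singularities, dispatch $n=3,5$ by direct positivity of $H_{\mathcal{C}}(n)$, and for $n=2$ combine $\text{prop}(D_j)=0$ with the combinatorial identities to derive the values \eqref{constraints} and reach the contradiction $(3a+b)k=-4\delta(W)$. The only addition you make is the explicit remark that $\delta(W)\geq 0$ follows from BMY applied to $W$ itself (using that $K_W$ is nef), which the paper uses tacitly in the final inequalities but never states; this is a helpful clarification rather than a departure from the argument.
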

It is natural to wonder whether the assumption for curves being members of effective and ample linear systems is optimal. Let us recall the following example which can be found in \cite{H84}.
\begin{example}
We consider $A = T \times T$ an abelian surface which is the product of two elliptic curves with complex multiplication. We consider the arrangement $\mathcal{L}_{1} = \{F_{1},F_{2},\triangle , G\}$, where $F_{1}, F_{2}$ are two fibers, $\triangle$ is the diagonal, and $G$ is the graph of complex multiplication. It is easy to observe that these four curves intersect exactly at one quadruple point, and the Hirzebruch-Kummer cover of $A$ branched along $\mathcal{L}_{1}$ provides a surface $Y$ for which we have $c_{1}^{2}(Y) = 3e(Y)$, so we obtained a ball-quotient.
\end{example}
\section{Addendum to \cite[Theorem 4.2]{Pokora}}
In this addendum, we would like to improve one of the author's results from \cite{Pokora}. Before we present the improvement, we recall one definition from \cite{Pokora} in order to keep the coherence with the mentioned article.
\begin{definition}
An arrangement $\mathcal{C} = \{C_{1}, ..., C_{k}\} \subset \mathbb{P}^{2}$ is called $d$-configuration if
\begin{itemize}
\item all curves $C_{i}$ are smooth of degree $d \geq 2$;
\item all intersection points are transversal;
\item $t_{k}=0$.
\end{itemize}
\end{definition}
Now our aim is to show the following theorem (cf. \cite[Theoerm 4.2]{Pokora}).
\begin{theorem}
There does not exist any $d$-configuration of curves in $\mathbb{P}^{2}$ with $d\geq 3$ such that the associated Hirzebruch-Kummer cover is a ball-quotient.
\end{theorem}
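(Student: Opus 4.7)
The plan is to run the argument of Section~4 with $W = \mathbb{P}^{2}$, setting $a = A^2 = d^2$, $b = K_{\mathbb{P}^{2}}\cdot C_j = -3d$, and noting that $\delta(\mathbb{P}^{2}) := 3e(\mathbb{P}^{2}) - K_{\mathbb{P}^{2}}^2 = 0$. Although $K_{\mathbb{P}^{2}}$ is not nef and effective (so Theorem~4.3 does not apply off the shelf), the Chern-number computations and the construction of $H_{\mathcal{C}}(n)$ never invoke nefness of $K_W$; moreover one does not need the bare inequality $H_{\mathcal{C}}(n)\geq 0$, because the ball-quotient hypothesis is exactly $H_{\mathcal{C}}(n) = 0$, and the proportionality identities from \cite{BHH87} hold for every irreducible component of the reduced ramification divisor of any ball-quotient cover irrespective of the base.

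Assuming $Y$ is a ball-quotient, I would apply the proportionality condition to the exceptional divisors $E_P$ to recover (\ref{firstprop}), which forces $n\in\{2,3,5\}$ with allowed multiplicities $r_P\in\{2,6\}$, $\{2,4\}$, $\{2,3\}$ respectively. For $n = 3$ the equation $H_{\mathcal{C}}(3) = 0$ becomes, after inserting $\delta(\mathbb{P}^{2}) = 0$, $a = d^2$, $b = -3d$,
$$\tfrac{d(7d-9)}{2}\,k + t_2 \;=\; 0,$$
and for $n = 5$ it becomes
$$d(11d-15)\,k + 4t_2 + 3t_3 \;=\; 0.$$
Each left-hand side is a sum of strictly positive terms for $d\geq 3$ and $k\geq 5$, yielding contradictions.

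For $n = 2$ I would use, exactly as in Section~4, both proportionality conditions ${\rm prop}(C) = 0$ on exceptional curves and ${\rm prop}(D_j) = 0$ on strict transforms of the curves $C_j$ to derive the closed-form expressions (\ref{constraints}) for $t_2$ and $t_6$. Substituting these into (\ref{secondprop}) and using $\delta(\mathbb{P}^{2}) = 0$ collapses the identity to
$$3d(d-1)\,k \;=\; 0,$$
which is impossible for $d\geq 3$, $k\geq 5$. The main obstacle is really one of bookkeeping: one must verify that every step of Section~4 that previously invoked the nefness or effectivity of $K_W$ can be bypassed under the ball-quotient hypothesis, and that the identity for ${\rm prop}(D_j)$ remains valid using only the intrinsic plane-curve invariants $C_j^2 = d^2$ and $e(C_j) = 2 - (d-1)(d-2)$. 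Once this is checked, $\delta(\mathbb{P}^{2}) = 0$ does the heavy lifting, and each of the three cases collapses to a manifestly positive quantity being set equal to zero.
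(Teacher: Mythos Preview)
Your proposal is correct and follows essentially the same approach as the paper: the paper's proof is an addendum to \cite{Pokora} that only treats the one remaining case $n=2$ with the combinatorics $t_2 = \frac{dk(dk-21d+30)}{12}$, $t_6 = \frac{dk(dk+3d-6)}{36}$ (exactly your specialization of (\ref{constraints}) to $a=d^2$, $b=-3d$), and then plugs into $H_{\mathcal{C}}(2)=0$ to obtain $36d(d-1)=0$, equivalent to your $3d(d-1)k=0$. Your treatment is simply more self-contained, redoing the $n\in\{3,5\}$ cases via the Section~4 formulas rather than citing \cite{Pokora}, and explicitly noting why the nefness hypothesis on $K_W$ is unnecessary once the ball-quotient hypothesis is assumed.
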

\begin{proof}
In order to prove the theorem, we need to exclude the last remaining combinatorial case from \cite{Pokora}, namely $n=2$ and $d$-configurations with $d\geq 3$ having the following combinatorics:
$$t_{2} = \frac{dk(dk-21d+30)}{12}, \quad t_{6} = \frac{dk(dk+3d-6)}{36}.$$
Suppose that $\mathcal{C}$ as above leads to a ball-quotient $Y$. Then
$$0 = 3e(Y)-c_{1}^{2}(Y) = (5d^{2}-6d)k + t_{2} - 3t_{6}.$$
This gives
$$(5d^{2}-6d)k + \frac{dk(dk-21d+30)}{12} = 3 \cdot \frac{dk(dk+3d-6)}{36},$$
which leads to 
$$36d(d-1) = 0,$$
a contradiction.
\end{proof}
\begin{remark}
Very recently Professor Fabrizio Catanese informed me that Mara Neusel in her Diplomarbeit (unpublished) was working on the problem of the existence of ball-quotients constructed via Hirzebruch-Kummer covers of the complex projective plane branched along arrangements of conics.
\end{remark}
\section*{Acknowledgement}
I am really grateful to Giancarlo Urz\'{u}a for sharing with me the master thesis of Sebastian Eterovi\'{c}, his former student. The mentioned thesis was an inspiration for the first part of this paper. I would also like to thank Piotr Achinger, Igor Dolgachev, Micha\l \, Kapustka, Xavier Roulleau, and Giancarlo Urz\'{u}a for useful comments. Finally, I would like to thank Professor Fabrizio Catanese for enlightening conversations on Hirzebruch-Kummer covers and algebraic surfaces, and for important remarks for which I am extremely grateful. During the project the author was supported by the programme for young researchers at Institute of Mathematics Polish Academy of Sciences.

\bigskip
   Piotr Pokora,
    Institute of Mathematics,
    Polish Academy of Sciences,
    ul. \'{S}niadeckich 8,
    PL-00-656 Warszawa, Poland. \\
\nopagebreak
   \textit{E-mail address:} \texttt{piotrpkr@gmail.com}

\end{document}